\theoremstyle{plain}
\newtheorem*{main}{Theorem}
\newtheorem{theorem}{Theorem}
\newtheorem{lemma}{Lemma}
\theoremstyle{remark}
\newtheorem*{acknowledgment}{Acknowledgment}
\numberwithin{equation}{section}
\newcommand{\seclabel}[1]{\label{sec:#1}}   
\newcommand{\eqnlabel}[1]{\label{eqn:#1}}   
\newcommand{\secref}[1]{\ref{sec:#1}}   
\newcommand{\eqnref}[1]{\eqref{eqn:#1}} 
\newcommand{\by}[1]{\overset{\eqnref{#1}}=}  
\newcommand{\byx}[1]{\overset{#1}=}          
\newcommand{\Eid}{\mathbf{E}}
\title{An Elegant 3-Basis for Inverse Semigroups}
\author{Jo\~{a}o Ara\'{u}jo}
\author{Michael Kinyon}
\address[Ara\'{u}jo]{Centro de \'{A}lgebra \\
Universidade de Lisboa \\
1649-003 Lisboa \\ Portugal\\
and\\
Universidade Aberta\\
1269--001 Lisboa \\ Portugal}
\email{\url{jaraujo@ptmat.fc.ul.pt}}
\address[Kinyon]{Department of Mathematics \\
University of Denver \\ 2360 S Gaylord St \\ Denver, Colorado 80208 USA}
\email{\url{mkinyon@math.du.edu}}
\begin{document}

\begin{abstract}
It is well known that in every inverse semigroup the binary operation and the  unary operation of inversion satisfy the following three identities:
\[
\quad x=(xx')x \qquad
\quad (xx')(y'y)=(y'y)(xx') \qquad
\quad (xy)z=x(yz'')\,.
\] The goal of this note is to prove the converse, that is, we prove that an algebra of type $\langle 2,1\rangle$ satisfying these three identities is an inverse semigroup and the unary operation coincides with the usual inversion on such semigroups.
\end{abstract}

\maketitle

\section{Introduction}
\seclabel{intro}

In the language of a binary operation $\cdot$ and a unary operation ${}'$, a set of $n$ independent identities is an $n$-basis
for inverse semigroups, if those identities define the variety of inverse semigroups considered as algebras $(S,\cdot,{}')$
of type $\langle 2,1\rangle$, where the unary operation coincides with the natural inversion.
Denoting by $x'$ the inverse of an element $x$ in an inverse semigroup, we then have $x=(xx')x$ (as inverse semigroups are
regular semigroups) and $(xx')(y'y)=(y'y)(xx')$ (as both $xx'$ and $y'y$ are idempotents, and idempotents commute in inverse
semigroups). Thus we might be tempted to think that the following identities provide a $3$-basis for inverse semigroups:
\begin{equation}
\eqnlabel{candidates}
x=(xx')x,  \qquad (xx')(y'y)=(y'y)(xx') \qquad  \text{and}\qquad (xy)z=x(yz)\,.
\end{equation}
However, for $S=\{0,1\}$ with $xy=0$, except for $11=1$, and defining $x'=1$, we have the previous identities satisfied,
but $0'\neq 0'00'$ and hence $'$ does not coincide with the natural inversion in $(S,\cdot)$.

B.M. Schein \cite{Schein} repaired the {\em defect} of \eqnref{candidates} by adjoining two additional identities: $x''=x$ and $(xy)'=y'x'$. The resulting set of five identities indeed provides a $4$-basis for inverse semigroups. (The identity $(xy)'=y'x'$ is dependent upon the others, and hence can be discarded. However it is worth observing that in the same paper Schein also provided a $5$-basis using $xx'x'x=x'xxx'$ instead of $xx'y'y=y'yxx'$; see \cite[Theorem 1.6]{Schein} and
\cite[p. 15, Ex. 20(b)]{higgins}.) Therefore the natural question to ask would be: \emph{is it possible to find a 3-basis for inverse semigroups?} This question was first answered in the affirmative in \cite{AM}, but the $3$-basis given there requires an extremely complicated proof (it is still an open problem to provide a reasonable proof for that result).

The aim of this note is to repair \eqnref{candidates} by providing an easy, transparent and
\emph{elegant} $3$-basis for inverse semigroups.

\begin{main}
Let $(S,*,')$ be an algebra of type $\langle 2,1\rangle$. Then this algebra is an inverse semigroup and the unary operation coincides with the usual inversion on such semigroups if and only if
\[
(\Eid_1)\quad x=(xx')x, \qquad
(\Eid_2)\quad (xx')(y'y)=(y'y)(xx'), \qquad
(\Eid_3)\quad (xy)z=x(yz'')\,.
\]
\end{main}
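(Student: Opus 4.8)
The plan is to dispose of the forward implication quickly and then focus on the converse. If $(S,*,')$ is an inverse semigroup with $'$ the natural inversion, then $x''=x$, so $(\Eid_3)$ is simply associativity, $(\Eid_1)$ is regularity, and $(\Eid_2)$ is the standard fact that the idempotents $xx'$ and $y'y$ commute. So assume henceforth only that $(S,*,')$ satisfies $(\Eid_1)$, $(\Eid_2)$, $(\Eid_3)$; the goal is to show that $(S,*)$ is a semigroup and that $'$ assigns to each element its (necessarily unique) inverse.

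First I would collect some cheap consequences of the axioms by substitution. Setting $y=x'$, $z=x$ in $(\Eid_3)$ and using $(\Eid_1)$ gives $x(x'x'')=x$; setting $y=x'$ in $(\Eid_3)$ gives $(xx')z=x(x'z'')$; and since $(\Eid_1)$ lets me replace $x$ by $(xx')x$ in the leftmost position of $(\Eid_3)$, I obtain $xz=(xx')(xz'')$ and hence, applying $(\Eid_3)$ once more, $xz=x(x'(xz'')'')$. Expanding $((xy)z)w$ with $(\Eid_3)$ in two different orders yields $x((yz'')w'')=x(y(zw'')'')$. Together these give several ways to shuffle double primes through products, which is what the next step will exploit.

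The decisive step --- which I expect to be far and away the hardest --- is to prove $x''=x$. All three axioms should be needed here: the example following \eqnref{candidates} satisfies $(\Eid_1)$, $(\Eid_2)$ and associativity but not $x''=x$, so $(\Eid_3)$ is indispensable, and $(\Eid_2)$ appears to be as well. A natural intermediate target is surjectivity of $'$, equivalently that every element of $S$ is a double prime: with that in hand, $(\Eid_3)$ can be rewritten as $(xy)z=(xy)z''$ for all $x,y,z$, and one can then try to bootstrap this identity down to $z=z''$ by multiplying on the left by idempotents of the form $xx'$ --- whose behaviour is constrained by $(\Eid_1)$ --- and using $(\Eid_2)$ to slide such idempotents past one another. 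Pinning down the exact chain of intermediate identities is the principal obstacle; I would not expect the rest of the proof to offer comparable resistance.

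Once $x''=x$ is established the argument closes rapidly. Then $(\Eid_3)$ reads $(xy)z=x(yz)$, so $(S,*)$ is a semigroup; $(\Eid_1)$ gives $xx'x=x$, so $S$ is regular, and $(\Eid_1)$ applied to $x'$ (using $x''=x$) gives $x'=x'x''x'=x'xx'$, so $'$ sends each element to a genuine semigroup inverse. It remains to show that idempotents commute. For an idempotent $e$, $(\Eid_2)$ with $x=y=e$ gives $(ee')(e'e)=(e'e)(ee')$; the right-hand side reduces to $e'ee'$, which equals $e'$ by $x'xx'=x'$, and the left-hand side reduces to $ee'e'e$, so $ee'e'e=e'$, and left- and right-multiplying by $e$ give $e'=ee'=e'e$. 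In particular $e'$ is itself idempotent (as $e'e'=(e'e)(ee')=e'ee'=e'$), and applying the same conclusion to $e'$ (noting $(e')'=e$) gives $e=e'e=ee'$; hence $e=e'e=e'$, so every idempotent is fixed by $'$. Now $(\Eid_2)$ with $x=e$ and $y=f$, for idempotents $e,f$, gives $ef=fe$. Thus $(S,*)$ is a regular semigroup in which idempotents commute, that is, an inverse semigroup, and $'$ --- which assigns to each element an inverse, and in an inverse semigroup the inverse is unique --- is precisely the natural inversion. This completes the proof.
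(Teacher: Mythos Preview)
Your handling of the forward direction and of everything after $x''=x$ is fine; in particular, your argument that idempotents are fixed by ${}'$ and therefore commute via $(\Eid_2)$ is correct and self-contained (the paper instead just notes that $x''=x$ together with associativity reproduces Schein's $4$-basis and stops).

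The gap is exactly where you locate it: you do not prove $x''=x$. You call this ``the decisive step'' and then propose, as an intermediate target, surjectivity of ${}'$, hoping to rewrite $(\Eid_3)$ as $(xy)z=(xy)z''$ and bootstrap. But this plan does not actually connect: from $(\Eid_3)$ one has $(xy)z=x(yz'')$ and $(xy)z''=x(yz'''')$, so $(xy)z=(xy)z''$ would follow from $z''=z''''$, i.e.\ from $x'''=x'$, not from mere surjectivity of ${}'$. Surjectivity alone gives you nothing to compare $z''$ with $z''''$; the ``cheap consequences'' you list do not get you there either. So as written the proposal is a sketch with its central lemma missing and with a heuristic pointing in a direction that does not obviously close.

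The paper's route to $x''=x$ is different and quite specific. It first derives a handful of one-variable identities (notably $x'x''=x'x$, $x=x(x'x)$, $x''=(x''x')x$, and $x'''x=x'''x''$), then proves $(x'x)x'''=x'''$ using $(\Eid_2)$ in the form $(x'x)(x'''x'''')=(x'''x'''')(x'x)$. From this it gets $x'''=x'$, which immediately upgrades $(\Eid_3)$ to the partial associativity $(xy)z'=x(yz')$. That partial associativity, together with $(\Eid_2)$, yields $x''x'=xx'$, and then $x''=(x''x')x=(xx')x=x$. The key intermediate is thus $x'''=x'$, not surjectivity of ${}'$; if you want to complete your proof you should aim for that identity.
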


\section{Proof of the Theorem}
\seclabel{Proof}

In this section we prove that the identities ($\Eid_1$)--($\Eid_3$) imply Schein's $4$-basis for inverse semigroups. As the converse is obvious, the equivalence of the two bases will follow.

Throughout this section let $(S,\cdot,{}')$ be an algebra of type $\langle 2,1\rangle$ satisfying ($\Eid_1$)--($\Eid_3$). We start by proving a few handy identities.

\begin{lemma} The following identities hold.
\begin{align}
x'x''&= x'x  \eqnlabel{lemma1} \\
(xy')y&=x(y'y)  \eqnlabel{lemma5}\\
x & =x(x'x)  \eqnlabel{lemma3b}\\
x''&= (x''x')x = x''(x'x) \eqnlabel{lemma6}\\
x'''x&=x'''x''=x''' x^{(4)}  \eqnlabel{lemma50}
\end{align}
\end{lemma}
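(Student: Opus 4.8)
The plan is to derive the five identities \eqnref{lemma1}--\eqnref{lemma50} in the order listed, each one built from $(\Eid_1)$--$(\Eid_3)$ together with the ones already established. The whole Lemma rests on \eqnref{lemma1}, so I expect that to be the main obstacle; the others then follow by short re-association arguments.

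For \eqnref{lemma1}, that is $x'x'' = x'x$, the maneuver I would use is the following. Apply $(\Eid_1)$ not to $x$ but to $x'$, obtaining $x' = (x'x'')x'$, and re-associate the right-hand side by $(\Eid_3)$ (with $x\mapsto x'$, $y\mapsto x''$, $z\mapsto x'$) to get $x' = x'(x''x''')$. Now multiply this on the right by $x$ and re-associate once more by $(\Eid_3)$ (now $z\mapsto x$, so $z''\mapsto x''$), producing $x'x = x'\bigl((x''x''')x''\bigr)$; finally $(\Eid_1)$ applied at $x''$ collapses $(x''x''')x''$ to $x''$, giving $x'x = x'x''$. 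The one genuinely non-obvious decision is to feed $x'$ into $(\Eid_1)$ and then reintroduce $x$ by right multiplication; every other step is essentially forced.

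With \eqnref{lemma1} available, \eqnref{lemma5} and \eqnref{lemma3b} are quick: instantiating $(\Eid_3)$ with $y\mapsto y'$, $z\mapsto y$ gives $(xy')y = x(y'y'')$, and \eqnref{lemma1} rewrites $y'y'' $ as $y'y$, which is \eqnref{lemma5}; instantiating $(\Eid_3)$ with $y\mapsto x'$, $z\mapsto x$ gives $(xx')x = x(x'x'')$, and applying $(\Eid_1)$ on the left and \eqnref{lemma1} on the right yields \eqnref{lemma3b}.

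For \eqnref{lemma6} and \eqnref{lemma50}, some of the asserted equalities are already instances of earlier ones: the second equality of \eqnref{lemma6}, namely $(x''x')x = x''(x'x)$, is just \eqnref{lemma5} with $x\mapsto x''$, $y\mapsto x$, and $x'''x'' = x'''x^{(4)}$ is \eqnref{lemma1} with $x\mapsto x''$. What remains is to prove $x'' = (x''x')x$ and $x'''x = x'''x''$, and these are the secondary obstacle, since they require linking $x$ with $x''$ rather than merely shuffling primes. I would attack $x'' = (x''x')x$ by applying \eqnref{lemma3b} at $x''$ (giving $x'' = x''(x'''x'')$) and re-associating once by $(\Eid_3)$ to get $x'' = (x''x''')x$, so that the task reduces to removing the surplus primes on the inner factors via \eqnref{lemma1}; and I expect $x'''x = x'''x''$ to come out similarly, or from \eqnref{lemma6} applied at $x'$ followed by $(\Eid_3)$. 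Once these "base" equalities are in place, filling in the routine arithmetic of \eqnref{lemma5}, \eqnref{lemma3b}, and the remaining parts of \eqnref{lemma6}--\eqnref{lemma50} is mechanical.
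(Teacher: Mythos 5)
Your proposal is correct and essentially reproduces the paper's proof: your chain for \eqnref{lemma1} is the paper's computation read backwards (expand via $(\Eid_1)$ at $x'$, re-associate twice with $(\Eid_3)$, collapse with $(\Eid_1)$ at $x''$), and your arguments for \eqnref{lemma5}, \eqnref{lemma3b} and \eqnref{lemma6} coincide with the paper's. The one step you leave as a sketch, $x'''x = x'''x''$, does close along the line you indicate: \eqnref{lemma6} at $x'$ gives $x''' = x'''(x''x')$, so $(\Eid_3)$ yields $x'''x = x'''[(x''x')x'']$, and $(x''x')x'' = (x''x''')x'' = x''$ by \eqnref{lemma1} and $(\Eid_1)$ -- a minor variant of the paper's derivation, which instead uses \eqnref{lemma3b} at $x'''$, then $(\Eid_3)$, \eqnref{lemma6} at $x''$ and \eqnref{lemma1}.
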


\begin{proof}
Firstly, for \eqnref{lemma1}, we have
\[
x'x'' \byx{(\Eid_1)} x'[(x''x''')x'']
\byx{(\Eid_3)} [x'(x''x''')]x \byx{(\Eid_3)} [(x'x'')x']x \byx{(\Eid_1)} x'x\,.
\]

Next, for \eqnref{lemma5}, we compute $(xy')y \byx{(\Eid_3)} x(y'y'') \by{lemma1} x(y'y)$.

Regarding \eqnref{lemma3b}, we have $x(x'x) \by{lemma5} (xx')x \byx{(\Eid_1)} x$.

Then for \eqnref{lemma6}, we compute
$x'' \by{lemma3b} x''(x'''x'') \byx{(\Eid_3)} (x''x''')x \by{lemma1} (x''x')x \by{lemma5} x''(x'x)$.

Finally, for \eqnref{lemma50}, we have
\[
x'''x \by{lemma3b} [x'''(x''''x''')]x \byx{(\Eid_3)} x'''[(x''''x''')x'']
\by{lemma6} x''' x'''' \by{lemma1} x''' x''\,.
\]
\end{proof}

The next two lemmas are the key tools in the proof that the identities ($\Eid_1$)--($\Eid_3$)
imply $x''=x$.

\begin{lemma}
\label{396}
 $(x'x)x'''=x'''$.
\end{lemma}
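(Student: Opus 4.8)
The plan is to establish $(x'x)x''' = x'''$ by running a computation that trades on the associativity-like rule $(\Eid_3)$ together with the five identities already in hand from the first lemma. The natural starting point is to rewrite $x'''$ using \eqnref{lemma6} applied to $x'$ in place of $x$, namely $x''' = (x'''x'')x' = x'''(x''x')$; the second form exposes a product $x''x'$ sitting to the right, which is exactly the kind of ``idempotent-ish'' factor that $(\Eid_2)$ lets one commute past other such factors. I would then try to insert a copy of $x'x$ on the left of $x'''$ and push it inward using $(\Eid_3)$ (to move parentheses) and \eqnref{lemma1}, \eqnref{lemma50} (to collapse towers of primes), aiming to reach something that simplifies back to $x'''$ via \eqnref{lemma3b} or \eqnref{lemma6}.

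Concretely, I expect the skeleton to look like: start from $x''' \byx{\eqnref{lemma6}} x'''(x''x')$, then recognize $x''x'$ as (by \eqnref{lemma1} applied to $x'$) equal to $x''x''$ — wait, more carefully, \eqnref{lemma1} gives $y'y'' = y'y$, so with $y = x'$ we get $x''x''' = x''x'$, which is the reverse substitution, so I would instead use it to replace occurrences of $x''x'$ by $x''x'''$ when that is advantageous. The goal is to manufacture a factor of the form $x'x$ (a ``left idempotent'') and a factor involving only even or only odd powers so that $(\Eid_2)$ can swap them, after which $(\Eid_1)$ or \eqnref{lemma3b} eats one of them. A plausible line: $(x'x)x''' = (x'x)[x'''(x''x')] \byx{(\Eid_3)} (x'x)[\text{something}]$ — but $(\Eid_3)$ wants the primed factor on the far right, so I would rather write $x''' = x'''(x''x') = (x''')(x''x')$ and view the whole target $(x'x)x'''$ as $(x'x)\cdot x'''$, then use \eqnref{lemma5} in the form $(ab')b = a(b'b)$ to reassociate if $x'''$ can be exhibited as $c'c$-like. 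Since $x''' = (x''')' {}' = x'''$ is unhelpful, the cleaner route is probably to compute $x'(xx''')$ first, show it equals $x'x'''$ or $x''x'''$, and then left-multiply.

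The main obstacle I anticipate is the bookkeeping of prime-towers: every application of $(\Eid_3)$ raises the number of primes on the rightmost variable by two, and the cleanup identities \eqnref{lemma1}, \eqnref{lemma50} only lower a tower by one or two from specific starting heights ($x'x'' \to x'x$, $x'''x^{(4)} \to x'''x''$), so I must sequence the rewrites so that whenever a high tower appears it is immediately in a shape where \eqnref{lemma50} applies. A safe strategy is to never let the exponent exceed $4$: apply $(\Eid_3)$, then instantly apply \eqnref{lemma1} or \eqnref{lemma50} to come back down, repeating as needed. If a direct forward computation stalls, the fallback is to prove the equivalent statement $x'x = x'(x x'') \cdot(\ldots)$ by first deriving the auxiliary fact $x''x''' x = x''' x$ or $ (x'x)(x''x'') = x''x''$ from $(\Eid_2)$, and then specialize. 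I expect the final proof to be only three or four rewrite steps once the right intermediate expression is spotted, so the real work is choosing that intermediate expression rather than any lengthy calculation.
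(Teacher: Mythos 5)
There is a genuine gap: what you have written is a strategy sketch, not a proof. No complete chain of rewrites from $(x'x)x'''$ to $x'''$ is ever produced; each concrete line you start is abandoned (``wait, more carefully\dots'', ``is unhelpful''), and your closing remark that the proof is ``three or four rewrite steps once the right intermediate expression is spotted'' concedes precisely the point at issue — the lemma \emph{is} the choice of that intermediate expression. The paper's proof pins it down with two auxiliary identities: $(x(y'''y))y' = xy'''$ (from ($\Eid_3$), \eqnref{lemma50} and ($\Eid_1$)) and, crucially, $(x'x)(x'''x) = x'''x$, which is obtained by first rewriting $x'''x$ as $x'''x''''$ via \eqnref{lemma50} so that both factors have the shapes $y'y$ and $zz'$ demanded by ($\Eid_2$), commuting them, and collapsing with \eqnref{lemma5} and \eqnref{lemma6}. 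The lemma then falls out as $x''' = (x'''x'')x' = (x'''x)x' = [(x'x)(x'''x)]x' = (x'x)x'''$. Your candidate auxiliary facts are not these and do not serve: $(x'x)(x''x'')$ is not of the form $(yy')(z'z)$, so ($\Eid_2$) does not apply to it, and the unparenthesized expression $x''x'''x$ has no meaning before associativity is established.

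Your fallback route is also flawed in two ways. First, the proposed intermediate claim is false: ($\Eid_3$) with $z = x'$ gives $(x'x)x' = x'(xx''')$, and $(x'x)x' = (x'x'')x' = x'$ by \eqnref{lemma1} and ($\Eid_1$), so $x'(xx''') = x'$; it does not equal $x'x'''$ or $x''x'''$ even in a genuine inverse semigroup (where these are $x'x'$ and $xx'$ respectively). Second, converting between $x'(xx''')$ and the target $(x'x)x'''$ requires reassociating a product whose rightmost factor carries a single prime, i.e.\ exactly Lemma \ref{lemma16}, which is proved later and whose proof depends on the present lemma — so that route is circular. The general heuristics (keep prime-towers below height four, use ($\Eid_2$) to swap idempotent-like factors) are sound and consistent with the paper's method, but without the two auxiliary identities above the argument does not close.
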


\begin{proof}
 We start with two observations. Firstly,
 as
\[
[x(y''' y)]y' \byx{(\Eid_3)} x[(y'''y)y''']
\by{lemma50} x[(y'''y'''')y'''] \byx{(\Eid_1)} xy'''\,,
\]
we have
\begin{equation}
\eqnlabel{136}
(x(y''' y))y' = xy'''\,.
\end{equation}

Secondly,
\[
(x'x)(x'''x) \by{lemma50} (x'x)(x'''x'''')
\byx{(\Eid_2)} (x'''x'''')(x'x) \by{lemma50} (x'''x'')(x'x)
\by{lemma5} [(x'''x'')x']x \by{lemma6} x'''x\,,
\]
so that
\begin{align}
(x'x)(x'''x)&=x'''x.  \eqnlabel{148}
\end{align}

Now we have all we need to prove the lemma.
\[
 x'''\by{lemma6}(x'''x'')x'\by{lemma50}(x'''x)x'\by{148}[(x'x)(x'''x)]x'\by{136}(x'x)x'''.
\]
\end{proof}

\begin{lemma}\label{lemma16}
 $(xy)z'=x(yz')$.
\end{lemma}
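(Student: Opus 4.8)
The identity ($\Eid_3$) already gives us a form of associativity with a double prime on the rightmost factor: $(xy)z = x(yz'')$. So the goal is to show that the double prime can be replaced by a single prime in this position. The natural strategy is to apply ($\Eid_3$) with the innermost variable already carrying a prime, i.e.\ start from $(xy)z' = x(y z''')$, and then try to collapse $z'''$ back to $z'$. The tool for that collapse should be Lemma~\ref{396}, which says $(x'x)x''' = x'''$; the prefix $x'x$ is an idempotent-like factor that "absorbs" the triple prime, and since $z'''$ and $z'$ are related by exactly one more application of the unary operation, one hopes a similar absorption works one level down.

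Here is the sequence of steps I would carry out. First, apply ($\Eid_3$) to rewrite $(xy)z'$ as $x(y z''')$. Second, use Lemma~\ref{396} (possibly with $z$ in place of $x$) to introduce a factor of the form $z''z'$ — namely $z''' = (z''z')z'''$ — so that $x(yz''') = x(y\,((z''z')z'''))$. Third, reassociate using ($\Eid_3$) or the already-established identities \eqref{lemma5}, \eqref{lemma6} to push the brackets around and peel off the extra primes; here \eqref{lemma50} ($x'''x = x'''x'' = x'''x^{(4)}$) and \eqref{lemma6} ($x'' = (x''x')x = x''(x'x)$) are the relevant rewriting rules since they let one trade $z'''$ against $z''$ and $z''$ against $z'$ inside a product. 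The aim is to arrive at $x(yz')$, perhaps after one more pass through ($\Eid_3$) in the other direction.

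The main obstacle I anticipate is that ($\Eid_3$) is \emph{asymmetric}: it only moves brackets when the result has a double-primed last factor, so each reassociation step forces primes to appear, and one must be careful that the primes introduced are exactly the ones Lemma~\ref{396} and \eqref{lemma50} can remove. In particular, a naive attempt like $(xy)z' \overset{(\Eid_3)}{=} x(yz''')$ and then "cancel two primes" does not work directly, because $z''' = z'$ is not yet available (that is essentially what $x''=x$ would give, and we have not proved it). So the real content is to find the right idempotent-type prefix — coming from Lemma~\ref{396} applied to $z$ — that lets the reassociation go through without needing $z''=z$. I expect the proof to be short once the correct first rewrite is chosen, but choosing it is the crux: the candidates are $z''' = (z''z')z'''$ from Lemma~\ref{396} and $z''' = z'''z''''$ or $z'''z''$ from \eqref{lemma50}, and one of these, fed into $x(y\,\cdot\,)$ and massaged by ($\Eid_3$), should telescope down to $x(yz')$.
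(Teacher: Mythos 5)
There is a genuine gap here. Your plan correctly starts by substituting $z\mapsto z'$ in ($\Eid_3$) to get $(xy)z'=x(yz''')$, but the one concrete rewrite you then propose, ``$z'''=(z''z')z'''$'', is not an instance of Lemma~\ref{396}: with $x$ replaced by $z$ that lemma gives $(z'z)z'''=z'''$ (prefix $z'z$, not $z''z'$), and with $x$ replaced by $z'$ it gives $(z''z')z''''=z''''$. Worse, the identity you wrote is actually false in inverse semigroups (once $z''=z$ it reads $(zz')z'=z'$, which already fails for partial bijections), so it cannot be derivable from ($\Eid_1$)--($\Eid_3$). The alternative candidates you list from \eqnref{lemma50} are likewise misquotations of that identity. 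Beyond this, the rest of the argument is only a sketch (``reassociate\dots peel off the extra primes\dots telescope''), with no actual chain of rewrites, so the lemma is not proved.

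The deeper misjudgment is your claim that $z'''=z'$ ``is not yet available'' because it would essentially require $x''=x$. In fact $x'''=x'$ can be proved at exactly this stage, and that is the whole content of the paper's proof. First one gets $xx'''=xx'$: by \eqnref{lemma3b}, ($\Eid_3$) and Lemma~\ref{396}, $xx'=[x(x'x)]x'=x[(x'x)x''']=xx'''$. Then $x''' \;=\; (x'x)x''' \;=\; (x'x'')x''' \;=\; x'(x''x^{(5)}) \;=\; x'(x''x''') \;=\; (x'x'')x' \;=\; x'$, using Lemma~\ref{396}, \eqnref{lemma1}, ($\Eid_3$), the identity $xx'''=xx'$ applied to $x''$, ($\Eid_3$) again, and ($\Eid_1$). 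With $x'''=x'$ in hand, the ``naive attempt'' you set aside is precisely the proof: $(xy)z'=x(yz''')=x(yz')$. So the crux is not finding an idempotent-type prefix inside $x(y\,\cdot\,)$, but establishing the triple-inverse identity $x'''=x'$ first; your proposal, as it stands, neither does this nor supplies a workable substitute.
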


\begin{proof}
We start by proving that
\begin{equation}
\eqnlabel{455}
x'''=x'\,.
\end{equation}
In fact we have $xx' \by{lemma3b} [x(x'x)]x' \byx{(\Eid_3)} x[(x'x)x'''] = xx'''$, using
Lemma \ref{396} in the last equality. Thus
\begin{equation}
\eqnlabel{452}
xx'''=xx'\,.
\end{equation}
Now, by Lemma \ref{396},
\[
 x'''=(x'x)x''' \by{lemma1} (x'x'')x'''
\byx{(\Eid_3)} x'(x''x^{(5)}) \by{452} x'(x''x''') \byx{(\Eid_3)}
(x'x'')x' \byx{(\Eid_1)} x'\,.
\]

Replacing $z$ by $z'$ in ($\Eid_3$), we get
\[
 (xy)z'=x(yz''')=x(yz')\,,
\]
where the last equality follows from  \eqnref{455}. The lemma is proved.
\end{proof}

We have everything we need to prove our main result.

\begin{theorem}
The identities \emph{(}$\Eid_1$\emph{)}--\emph{(}$\Eid_3$\emph{)} imply $x'' = x$ and the associative law.
\end{theorem}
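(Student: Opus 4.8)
The plan is to first establish $x'' = x$ from the results already in hand, and then deduce associativity. From Lemma~\ref{lemma16} we have $(xy)z' = x(yz')$, and combining this with $(\Eid_3)$ in the form $(xy)z = x(yz'')$ gives $x(yz'') = x(yz')$ after substituting $z \mapsto z'$ and using $z''' = z'$ from \eqnref{455}; so the real work is to promote this to the bare identity $z'' = z$. The natural route is to produce, for each element, a "two-sided-like" expression that collapses. I would start from $(\Eid_1)$ applied to $x'$, namely $x' = (x'x'')x'$, and from \eqnref{lemma6}, $x'' = (x''x')x = x''(x'x)$; these say that $x'x$ and $x''x'$ behave like one-sided identities on the appropriate factors. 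The key will be to show $x'' = x$ by exhibiting $x$ as $x''$ times something that \eqnref{lemma50} and \eqnref{455} force to be trivial: since $x''' = x'$, the "tower" above any element stabilizes quickly, and $x^{(4)} = x''$ by applying \eqnref{455} to $x'$.

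Concretely, I expect the argument to run: using \eqnref{455} we get $x^{(4)} = x''$, so \eqnref{lemma1} reads $x'x'' = x'x$ and \eqnref{lemma50} collapses to a single equation $x'''x = x'''x'' = x'''x^{(4)}$, i.e. $x'x = x'x''$ again (consistently). Then I would compute $x$ two ways. On one hand $x \byx{(\Eid_1)} (xx')x$; on the other, using \eqnref{lemma3b} $x = x(x'x)$ and now $x'x = x''x''' = x''x'$ (by \eqnref{lemma1} with $x \mapsto x''$, or directly), so $x = x(x''x')$. Applying Lemma~\ref{lemma16} (with $z \mapsto x'$, legitimate since the last factor is inverted) to move parentheses, $x(x''x') = (xx'')x'$, and by \eqnref{452} $xx'' = xx'$, giving $x = (xx')x' $. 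Hmm — that is not quite $x''$ yet, so instead I would aim the same manipulation at $x''$: from \eqnref{lemma6}, $x'' = x''(x'x)$, rewrite $x'x$ using the tower identities, apply Lemma~\ref{lemma16} to reassociate, and use \eqnref{452}/\eqnref{455} to reduce the right-hand factor to $x'$ itself, landing on $x'' = (x''x')x = \cdots = x$ via $(\Eid_1)$ read for $x$. The main obstacle is precisely this bookkeeping: finding the one reassociation (via Lemma~\ref{lemma16}, which only lets us move an inverted last factor) that turns a known expression for $x''$ into $(xx')x$, which $(\Eid_1)$ identifies with $x$.

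Once $x'' = x$ is in hand, associativity is immediate: $(\Eid_3)$ becomes $(xy)z = x(yz'') = x(yz)$, which is exactly the associative law. So the entire difficulty is concentrated in the involution identity, and within that, in choosing the right instance of Lemma~\ref{lemma16} together with \eqnref{452} (that $xx'' = xx'$) and \eqnref{455} (that $x''' = x'$) to force the collapse; everything else is substitution into identities already proved in the preceding two lemmas.
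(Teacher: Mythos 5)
Your reduction of associativity to $x''=x$ is exactly the paper's (and is fine), but your derivation of $x''=x$ has a genuine gap, both in the concrete steps you write and in the idea you leave as ``bookkeeping''. Concretely: \eqnref{452} asserts $xx'''=xx'$ (which, once \eqnref{455} is known, carries no new information), not $xx''=xx'$; and the intermediate claims $x'x=x''x'''=x''x'$, $xx''=xx'$, and the resulting $x=(xx')x'$ are all false in genuine inverse semigroups (already in any nonabelian group, reading ${}'$ as inversion), so they cannot be consequences of ($\Eid_1$)--($\Eid_3$). After abandoning that computation you say the remaining task is to find ``the one reassociation via Lemma \ref{lemma16} that turns a known expression for $x''$ into $(xx')x$''; but that is precisely the missing idea, and it is not obtained by reassociating with Lemma \ref{lemma16} alone. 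Notably, your final argument never invokes ($\Eid_2$), which is the decisive ingredient: it is the only way to interchange the two ``idempotent-like'' products $xx'$ and $x''x'$.

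What is missing is the single auxiliary identity $x''x'=xx'$, and ($\Eid_2$) is what proves it. The paper's route is
\[
x''x' \by{lemma6} [(x''x')x]x' = (x''x')(xx') \byx{(\Eid_2)} (xx')(x''x')
= [(xx')x'']x' = [x(x'x'')]x' = x[(x'x'')x'] \byx{(\Eid_1)} xx'\,,
\]
where the unlabeled steps are applications of Lemma \ref{lemma16} (legitimate because the last factor is always a primed element, e.g.\ $x''=(x')'$), and ($\Eid_2$) applies because $x''x'$ has the form $y'y$ with $y=x'$ while $xx'$ has the form $xx'$. With this in hand, $x'' \by{lemma6} (x''x')x = (xx')x \byx{(\Eid_1)} x$, and then $(xy)z \byx{(\Eid_3)} x(yz'') = x(yz)$ as you say. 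So your outline targets the right statement and uses the right lemmas, but the collapse to $x''=x$ is not established: the identities you substitute for it are underivable, and the key use of ($\Eid_2$) is absent.
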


\begin{proof}
First, we have
\begin{align*}
x''x' &\by{lemma6} [(x''x')x]x' = (x''x')(xx') \byx{(\Eid_2)} (xx')(x''x')\\
&= [(xx')x'']x' = [x(x'x'')]x' = x[(x'x'')x'] \byx{(\Eid_1)} xx'\,,
\end{align*}
where we have used Lemma \ref{lemma16} in the unlabeled equalities. Thus
\begin{equation}
\eqnlabel{hmph}
x''x' = xx'\,.
\end{equation}

Now $x'' \by{lemma6} (x''x')x \by{hmph} (xx')x \byx{(\Eid_1)} x$, as claimed.

Associativity now follows easily: $(xy)z \byx{(\Eid_1)} x(yz'') = x(yz)$.
\end{proof}

\section{Other Sets of Axioms}

It is natural to ask how sensitive the axioms ($\Eid_1$)--($\Eid_3$) are
to certain modifications, such as shifting the parentheses in ($\Eid_1$)
or changing the placement of the double inverse in ($\Eid_3$).

If, for instance,
we leave ($\Eid_2$) intact, replace ($\Eid_1$) with $x(x'x) = x$ and replace ($\Eid_3$) with
$(x''y)z = x(yz)$, then we obtain a set of identities which are dual to
($\Eid_1$)--($\Eid_3$). By an argument dual to that in \S\secref{Proof},
this set of identities is another $3$-basis for inverse semigroups.

Thus to dispense with these sorts of obvious dualities, we will assume that
both ($\Eid_1$) and ($\Eid_2$) are left intact, and consider only alternative placement
of the double inverse in ($\Eid_3$). Using \textsc{Prover9}, we found that each of
the following identities can substitute for ($\Eid_3$) to give another $3$-basis for
inverse semigroups:
\begin{align*}
(xy)z &= x''(yz) \hspace{2cm}  (xy)z = x(y''z) \\
x(yz) &= (xy'')z   \hspace{2cm}   x(yz) = (xy)z''.
\end{align*}

The remaining possibility, $x(yz) = (x''y)z$, does not work. Using \textsc{Mace4}, we found
the counterexample given by the following tables. It satisfies ($\Eid_1$), ($\Eid_2$) and
$x(yz) = (x''y)z$, but the binary operation is not associative
($(0\cdot 0)\cdot 0 = 1\cdot 0 = 7\neq 6 = 0\cdot 1 = 0\cdot (0\cdot 0)$), and
the unary operation clearly fails to satisfy $x'' = x$.

\begin{table}[htb]
\centering
\begin{tabular}{r|cccccccccccc}
$\cdot$ & 0 & 1 & 2 & 3 & 4 & 5 & 6 & 7 & 8 & 9 & 10 & 11\\
\hline
    0 & 1 & 6 & 5 & 7 & 3 & 8 & 4 & 2 & 0 & 4 & 4 & 4 \\
    1 & 7 & 2 & 6 & 0 & 8 & 4 & 5 & 1 & 3 & 5 & 5 & 5 \\
    2 & 5 & 8 & 3 & 6 & 1 & 7 & 0 & 4 & 2 & 0 & 0 & 0 \\
    3 & 8 & 0 & 7 & 4 & 6 & 2 & 1 & 3 & 5 & 1 & 1 & 1 \\
    4 & 3 & 7 & 1 & 8 & 5 & 6 & 2 & 0 & 4 & 2 & 2 & 2 \\
    5 & 6 & 4 & 8 & 2 & 7 & 0 & 3 & 5 & 1 & 3 & 3 & 3 \\
    6 & 0 & 1 & 2 & 3 & 4 & 5 & 6 & 7 & 8 & 6 & 6 & 6 \\
    7 & 4 & 3 & 0 & 5 & 2 & 1 & 7 & 8 & 6 & 7 & 7 & 7 \\
    8 & 2 & 5 & 4 & 1 & 0 & 3 & 8 & 6 & 7 & 8 & 8 & 8 \\
    9 & 0 & 1 & 2 & 3 & 4 & 5 & 6 & 7 & 8 & 9 & 10 & 6 \\
    10 & 0 & 1 & 2 & 3 & 4 & 5 & 6 & 7 & 8 & 10 & 9 & 6 \\
    11 & 0 & 1 & 2 & 3 & 4 & 5 & 6 & 7 & 8 & 6 & 6 & 11
\end{tabular}

\bigskip

\begin{tabular}{r|cccccccccccc}
${}'$ & 0 & 1 & 2 & 3 & 4 & 5 & 6 & 7 & 8 & 9 & 10 & 11 \\
\hline
 & 1 & 2 & 3 & 4 & 5 & 0 & 6 & 8 & 7 & 9 & 10 & 11
\end{tabular}
\end{table}

\section{Problem}

\emph{Does there exist a $2$-basis for inverse semigroups?}

\medskip

We guess that the answer is no.

\begin{acknowledgment}
We are pleased to acknowledge the assistance of the automated deduction tool
\textsc{Prover9} and the finite model builder \textsc{Mace4}, both developed by
McCune \cite{McCune}.

The first author was partially supported by FCT and FEDER, Project POCTI-ISFL-1-143 of Centro de Algebra da Universidade de Lisboa, and by FCT and PIDDAC through the project PTDC/MAT/69514/2006.
\end{acknowledgment}


\begin{thebibliography}{99}

\bibitem{AM} J. Ara\'{u}jo and W. McCune,
Computer solutions of problems in inverse semigroups,
\textit{Comm. Algebra} \textbf{38} (2010), 1104--1121.

\bibitem{higgins} P. M. Higgins,
\emph{Techniques in Semigroup Theory}. Oxford University Press, New York, 1992.

\bibitem{McCune} W. McCune,
\textsc{Prover9} and \textsc{Mace4},
version LADR--2009--11A,
\url{http://www.cs.unm.edu/~mccune/prover9/}

\bibitem{Schein} B. Schein,
On the theory of inverse semigroups and generalized grouds,
\emph{Amer. Math. Soc. Transl.} (2) \textbf{113} (1979), 89--122.

\end{thebibliography}
\end{document}